\newtheorem{thm}{Theorem}[section]
\newtheorem{lemma}[thm]{Lemma}
\def\CC{\mathord{\mathbb{C}}}
\def\ZZ{\mathord{\mathbb{Z}}}
\def\ket#1{\vert #1 \rangle}
\def\bra#1{\langle #1 \vert}
\newcommand{\bmm}[1]{#1}
\title{On flagged $K$-theoretic symmetric polynomials}
\author[S. Iwao]{Shinsuke Iwao}
\address[S. Iwao]{Faculty of Business and Commerce, Keio University, Hiyosi 4-1-1, Kohoku-ku, Yokohama, Kanagawa 223-8521, Japan, ORCID: https://orcid.org/0000-0002-6847-7433 }
\email{iwao-s@keio.jp}
\subjclass{05E05, 05E14, 13M10, 14N15}
\keywords{\textit{Boson-Fermion correspondence, Grothendieck polynomial, $K$-theoretic symmetric polynomial}}         
\begin{document}

\maketitle

\begin{abstract}      
We provide a fermionic description of flagged skew Grothendieck polynomials, which can be seen as a $K$-theoretic counterpart of flagged skew Schur polynomials.
Our proof relies on the Jacobi-Trudi type formula established by Matsumura.
This result generalizes the author's previous works on a fermionic description of skew Grothendieck polynomials and multi-Schur functions.
\end{abstract}

\section{Introduction}

\subsection{Overview}

Grothendieck polynomials \cite{lascoux1982structure} are a family of polynomials that represent the structure sheaf of a Schubert variety in the $K$-theory of the flag variety.
Since each Schubert variety is naturally associated with a permutation, Grothendieck polynomials are indexed by a permutation.

A flagged Grothendieck polynomial~\cite{knutson2009grobner} is a Grothendieck polynomial that associates with a vexillary permutation.
Being a $K$-theoretic analog of flagged Schur polynomials,
flagged Grothendieck polynomials possess various interesting combinatorial and algebraic properties.
Knuston-Miler-Yong~\cite{knutson2009grobner} showed that flagged Grothendieck polynomials coincide with generating functions of flagged set-valued tableaux.
Hudson-Matsumura~\cite{hudson2018vexillary} provided a Jacobi-Trudi type formula for these polynomials.

For a permutation $w\in S_n$, the inversion set (see \cite{lascoux1985schubert,wachs1985flagged}) of $w$ is the subset $I_i(w)=\{j\,|\,i<j \mbox{ and } w(i)>w(j)\}$ of $\{1,2,\dots,n\}$.
A permutation $w$ is called \textit{vexillary} if the family $\{I_i(w)\}_{i=1,2,\dots,n}$ forms a chain by inclusion.
For a vexillary permutation $w$, we associate a partition $\lambda(w)$ by arranging the cardinalities of inversion sets in decreasing order.
A flagging of $w$ is the sequence obtained by arranging $\min I_i(w)-1$ in increasing order.
The flagged Grothendieck polynomial $G_w(x)$ is often written as $G_{\lambda,f}(x)$, where $\lambda=\lambda(w)$ and $f$ is the flagging of $w$.

In the work~\cite{matsumura2019flagged}, Matsumura generalized these polynomials for skew shapes $\lambda/\mu$ with a flagging $f/g$, where $f=(f_1,\dots,f_r)$ and $g=(g_1,\dots,g_r)$ are sequences of natural numbers.
He proved that a flagged skew Grothendieck polynomial, which was defined as a generating function of flagged skew set-valued tableaux, admits a Jacobi-Trudi type formula.
For $n,p,q\in \ZZ$, define $G_n^{[p/q]}(x)$ by the generating function
\begin{equation}
\sum_{n\in \ZZ}G_n^{[p/q]}(x)z^n=
\begin{cases}
\frac{1}{1+\beta u^{-1}}\prod_{k=q}^p\frac{1+\beta x_k}{1-x_ku} & (p\geq  q)\\
\frac{1}{1+\beta u^{-1}} & (p<  q)
\end{cases}.
\end{equation}
Then, Matsumura's determinant formula \cite[\S 4]{matsumura2019flagged} is expressed as 
\begin{equation}
G_{\lambda/\mu,f/g}(x)=
\det\left(
\sum_{s=0}^\infty
\binom{i-j}{s}\beta^s G^{[f_i/g_j]}_{\lambda_i-\mu_j-i+j+s}(x)
\right).
\end{equation}

In this paper, we study such ``flagged skew Grothendieck polynomials''.
However, we should adopt the slightly different definition
\begin{equation}\label{eq:gen1}
\sum_{n\in \ZZ}G_n^{[[p/q]]}(x)z^n=
\begin{cases}
\frac{1}{1+\beta u^{-1}}
\prod_{k=q}^p\frac{1+\beta x_k}{1-x_ku} & (p\geq  q)\\
\frac{1}{1+\beta u^{-1}} & (p=q-1)\\
\frac{1}{1+\beta u^{-1}} 
\prod_{k=p+1}^{q-1}\frac{1-x_ku}{1+\beta x_k}
& (p<  q-1)
\end{cases}
\end{equation}
and consider the polynomial
\begin{equation}\label{eq:def_det}
G_{\lambda/\mu,[[f/g]]}(x)=
\det\left(
\sum_{s=0}^\infty
\binom{i-j}{s}\beta^s G^{[[f_i/g_j]]}_{\lambda_i-\mu_j-i+j+s}(x)
\right)
\end{equation}
because the definition \eqref{eq:gen1} is more suitable for our algebraic calculations.
The polynomials $G_{\lambda/\mu,f/g}(x)$ and $G_{\lambda/\mu,[[f/g]]}(x)$ are different in general but coincide with each other if $f_i+\lambda_i-i\geq g_j+\mu_j-j$ whenever $f_i<g_j-1$.
In particular, if $g_1=g_2=\dots=g_r=1$, they coincide with each other for any skew shape $\lambda/\mu$ and a flagging $f$.

Our aim is to construct a new algebraic description of $G_{\lambda/\mu,[[f/g]]}(x)$ using concepts from vertex algebras, including fermion Fock spaces, vertex operators, and vacuum expectation values.
In the previous work~\cite[\S 4]{iwao2022free}, the author of the paper presented a fermionic description of skew Grothendieck polynomials.
Generalizing this approach, we show the main theorem (Theorem \ref{thm:main}) which provides a fermionic description for the flagged Grothendieck polynomial.
We emphasize the fact that our descriptions \eqref{eq:main_expression}, \eqref{eq:des_Glm} are not found in the previous work \cite{iwao2023free} of multi-Schur functions.
This suggests that there is still room for generalization of fermionic descriptions of $K$-theoretic polynomials.

\subsection*{Acknowledgements}
This work was partially supported by JSPS KAKENHI Grant Numbers 19K03065, 22K03239, and 23K03056.

\section{Preliminaries}\label{sec:pre}

\subsection{Fermion Fock space}

Let $\mathcal{A}$ be the $\CC$-algebra generated by the \textit{free fermions} $\psi_n,\psi_n^\ast$ $(n\in \ZZ)$ with anti-commutation relations
\[
[\psi_m,\psi_n]_+=[\psi^\ast_m,\psi^\ast_n]_+=0,\qquad
[\psi_m,\psi^\ast_n]_+=\delta_{m,n},
\]
where $[A,B]_+=AB+BA$ is the anti-commutator.

Let $\mathcal{F}=
\mathcal{A}\cdot \ket{0}
$ be the \textit{Fock space}, the left $\mathcal{A}$-module generated by the \textit{vacuum vector} 
\[
\psi_m\ket{0}=\psi^\ast_n\ket{0}=0,\quad
m< 0,\ n\geq 0.
\]
We also use the \textit{dual Fock space} $\mathcal{F}^\ast:=\bra{0}\cdot \mathcal{A}$, the right $\mathcal{A}$-module generated by the \textit{dual vacuum vector}
\[
\bra{0}\psi_n=\bra{0}\psi^\ast_m=0,\quad 
m< 0,\ n\geq 0.
\]
There uniquely exists an anti-algebra involution on $\mathcal{A}$ 
\[
{}^\ast:\mathcal{A}\to \mathcal{A};\quad \psi_n\leftrightarrow \psi_n^\ast,
\]
satisfying $(ab)^\ast=b^\ast a^\ast$ and $(a^\ast)^\ast=a$ for $a,b\in \mathcal{A}$, which induces the $\CC$-linear involution
\[
\omega:\mathcal{F}\to {\mathcal{F}}^\ast,\quad X\ket{0}\mapsto \bra{0}{X}^\ast.
\]

The \textit{vacuum expectation value} \cite[\S 4.5]{miwa2012solitons} is the unique $\CC$-bilinear map
\begin{equation}\label{eq:vacuum_expectation_value}
{\mathcal{F}}^\ast\otimes_k\mathcal{F}\to k,\quad 
\bra{w}\otimes \ket{v}\mapsto \langle{w}\vert v\rangle,
\end{equation}
satisfying $\langle 0\vert 0\rangle=1$,
$(\bra{w}\psi_n) \ket{v}=\bra{w} (\psi_n\ket{v})$,
and
$(\bra{w}\psi_n^\ast) \ket{v}=\bra{w} (\psi_n^\ast\ket{v})$.
For any expression $X$, we write 
$\bra{w}X\ket{v}:=(\langle{w}\vert X)\ket{v}
=\bra{w}(\vert X\ket{v})
$.
The expectation value $\bra{0}X\ket{0}$ is often abbreviated as $\langle X\rangle$.


\begin{thm}[Wick's theorem
(see {\cite[\S 2]{alexandrov2013free}, \cite[Exercise 4.2]{miwa2012solitons}})
]\label{thm:Wick}
Let $\{m_1,\dots,m_r\}$ and $\{n_1,\dots,n_{r}\}$ be sets of integers.
Then we have
\[
\langle 
\psi_{m_1}\cdots\psi_{m_{r}}
\psi^\ast_{n_r}\cdots\psi^\ast_{n_{1}}
\rangle
=\det(\langle \psi_{m_i}\psi^\ast_{n_j} \rangle)_{1\leq i,j\leq r}.
\]
\end{thm}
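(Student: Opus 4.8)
The plan is to prove the identity by induction on $r$, reducing everything to the elementary two-point functions $\langle\psi_m\psi_n^\ast\rangle$ and then matching the resulting recursion term-by-term against the Laplace (cofactor) expansion of the determinant on the right-hand side.

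First I would record the two-point function. Using the relation $[\psi_m,\psi_n^\ast]_+=\delta_{m,n}$ together with the annihilation conditions $\psi_m\ket{0}=0$ $(m<0)$, $\psi_n^\ast\ket{0}=0$ $(n\geq 0)$ and their duals, a one-line calculation gives $\langle\psi_m\psi_n^\ast\rangle=\delta_{m,n}$ when $n<0$ and $\langle\psi_m\psi_n^\ast\rangle=0$ when $n\geq 0$. Two consequences of this that I will exploit are: $\langle\psi_m\psi_n^\ast\rangle=\delta_{m,n}$ for \emph{all} $n$ whenever $m<0$, and $\langle\psi_m\psi_n^\ast\rangle=0$ for \emph{all} $n$ whenever $m\geq 0$. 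The base case $r=1$ is then the tautology $\langle\psi_{m_1}\psi_{n_1}^\ast\rangle=\det(\langle\psi_{m_1}\psi_{n_1}^\ast\rangle)$.

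For the inductive step I would isolate the leftmost operator $\psi_{m_1}$ and distinguish two cases by the sign of its index. If $m_1\geq 0$, then $\bra{0}\psi_{m_1}=0$ forces the left-hand side to vanish, while at the same time the entire first row $(\langle\psi_{m_1}\psi_{n_j}^\ast\rangle)_j$ of the matrix vanishes, so the determinant is $0$ as well; both sides agree. If $m_1<0$, I would anticommute $\psi_{m_1}$ to the right: it passes $\psi_{m_2},\dots,\psi_{m_r}$ with no contractions (since $[\psi,\psi]_+=0$), contributing an overall sign $(-1)^{r-1}$, and then sweeps through the block $\psi_{n_r}^\ast\cdots\psi_{n_1}^\ast$, where each anticommutator $[\psi_{m_1},\psi_{n_j}^\ast]_+=\delta_{m_1,n_j}$ produces a contracted term carrying sign $(-1)^{r-j}$. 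Because $m_1<0$, the leftover term in which $\psi_{m_1}$ reaches the vacuum dies ($\psi_{m_1}\ket{0}=0$), and in this regime the bare coefficient $\delta_{m_1,n_j}$ coincides with the two-point function $\langle\psi_{m_1}\psi_{n_j}^\ast\rangle$, so each contraction weight becomes exactly the matrix entry.

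Collecting the signs as $(-1)^{r-1}(-1)^{r-j}=(-1)^{j+1}$ and applying the inductive hypothesis to each surviving length-$(r-1)$ value $\langle\psi_{m_2}\cdots\psi_{m_r}\,\psi_{n_r}^\ast\cdots\widehat{\psi_{n_j}^\ast}\cdots\psi_{n_1}^\ast\rangle$ — which evaluates to the minor obtained by deleting row $1$ and column $j$ — I would recognize the sum as precisely the expansion of $\det(\langle\psi_{m_i}\psi_{n_j}^\ast\rangle)$ along its first row. The single point demanding care, and the only place the argument could go wrong if treated carelessly, is the simultaneous vanishing of the boundary term and the replacement of the raw anticommutator $\delta_{m_1,n_j}$ by the genuine two-point function $\langle\psi_{m_1}\psi_{n_j}^\ast\rangle$; this is exactly what the case split on the sign of $m_1$ secures, and it constitutes the main (if modest) obstacle in the proof.
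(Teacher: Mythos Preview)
Your argument is correct. The induction with the case split on the sign of $m_1$ works exactly as you describe: when $m_1\geq 0$ both sides vanish for the reasons you give, and when $m_1<0$ the anticommutation sweep produces the Laplace expansion along the first row, with the boundary term dying on the vacuum and $\delta_{m_1,n_j}$ agreeing with $\langle\psi_{m_1}\psi_{n_j}^\ast\rangle$ precisely because $m_1<0$. The sign bookkeeping $(-1)^{r-1}(-1)^{r-j}=(-1)^{j+1}$ is right.

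As for comparison with the paper: there is nothing to compare. The paper does not prove Theorem~\ref{thm:Wick}; it quotes Wick's theorem as a standard input with references to \cite[\S 2]{alexandrov2013free} and \cite[Exercise 4.2]{miwa2012solitons}. Your induction is essentially the textbook proof one finds in those sources, so you have supplied what the paper chose to outsource.
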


For an integer $m$, we define the \textit{shifted vacuum vectors} $\ket{m}\in \mathcal{F}$ and $\bra{m}\in\mathcal{F}^\ast$ by
\[
\ket{m}=
\begin{cases}
\psi_{m-1}\psi_{m-2}\cdots \psi_0\ket{0}, & m\geq 0,\\
\psi^\ast_{m} \cdots\psi^\ast_{-2}\psi^\ast_{-1}\ket{0}, & m<0,
\end{cases}\qquad
\bra{m}=
\begin{cases}
\bra{0}\psi^\ast_0\psi^\ast_1\dots \psi^\ast_{m-1}, & m\geq 0,\\
\bra{0}\psi_{-1}\psi_{-2}\dots \psi_{m}, & m<0.
\end{cases}
\]


\subsection{Vertex operators and commutation relations}\label{sec:comm_rels}

For any monomial expression $M$ in $\psi_n$ and $\psi^\ast_n$, the \textit{normal ordering} 
\[
:M:\quad \in \mathcal{A}
\]
is defined by moving the \textit{annihilation operators}
\[
\psi_m,\quad \psi^\ast_n,\qquad  m<0,\, n\geq 0
\]
to the right, and multiplying $-1$ for each move
(See \cite[\S 2]{alexandrov2013free}, \cite[\S 5.2]{miwa2012solitons}).
For example, we have $:\psi_1\psi^\ast_1:=\psi_1\psi^\ast_1$ and $:\psi^\ast_1\psi_1:=-\psi_1\psi^\ast_1$.
The normal ordering extends to the $\CC$-linear map
\[
\{\mbox{polynomial expressions in $\psi_n$ and $\psi^\ast_n$ with coefficients in $\CC$}\}\to \mathcal{A};\quad
X\mapsto {:X:}
\]

Let $a_m$ ($m\in \ZZ$) be the \textit{current operator} $a_m=\sum_{k\in \ZZ} :\psi_k\psi^\ast_{k+m}:$, which satisfies
\begin{equation}\label{eq:relation_added}
[a_m,a_n]=m\delta_{m+n,0},\qquad
[a_m,\psi_n]=\psi_{n-m},\qquad
[a_m,\psi^\ast_n]=-\psi^\ast_{n+m},
\end{equation}
where $[A,B]=AB-BA$
(see \cite[\S 5.3]{miwa2012solitons}).
If $\ket{v})=\bra{v^\ast}$, we have
$
\omega(a_i\ket{v})=\bra{v^\ast}a_{-i}
$ for any $n\in \ZZ$.

Let $X=(X_1,X_2,\dots)$ be a set of (commutative) variables.
We define the \textit{Hamiltonian operator} 
\[
H(X)=\sum_{n>0}\frac{p_n(X)}{n}a_n,\qquad p_n(X)=X_1^n+X_2^n+\cdots
\]
and its dual
\[
H^\ast(X)=\sum_{n>0}\frac{p_n(X)}{n}a_{-n}.
\]
We define the \textit{vertex operators} by
\[
e^{H(X)}=\sum_{n=0}^\infty \frac{H(X)^n}{n!},\qquad
e^{H^\ast(X)}=\sum_{n=0}^\infty \frac{H^\ast(X)^n}{n!}
\]

Let
$\psi(z)=\sum_{n\in \ZZ}\psi_nz^n$ and $\psi^\ast(z)=\sum_{n\in \ZZ}\psi^\ast_nz^n$ be the fermion fields.
Here, we summarize useful commutation relations concerning the vertex operators and the fermion fields. 
For proofs, see the textbook~\cite{miwa2012solitons}.
\begin{gather}
e^{H(X)}\psi(z)e^{-H(X)}=\left(\prod_{i}\frac{1}{1-X_iz}\right)\psi(z),\label{eq:comm1}\\
e^{H^\ast(X)}\psi(z)e^{-H^\ast(X)}=\left(\prod_{i}\frac{1}{1-X_iz^{-1}}\right)\psi(z),\label{eq:comm2}\\
e^{H(X)}e^{-H^\ast(Y)}=
\left(\prod_{i,j}(1-X_iY_j)\right)
e^{-H^\ast(Y)}e^{H(X)},\label{eq:comm3}\\
\bra{-r}\psi^\ast(w)\psi(z)\ket{-r}
=\frac{z^{-r}w^{-r}}{1-zw},\label{eq:comm4}
\end{gather}
where $\frac{z^{-r}w^{-r}}{1-zw}=\sum_{p=-r}^\infty z^pw^p$.

\section{Flagged Skew Grothendieck polynomial}

\subsection{$G^{[[f/g]]}_n(x)$}

In this section, we give a fermionic presentation of flagged skew Grothendieck polynomials. 
For brevity, we adopt the convention 
\[
\prod_{k=q}^pX_k
=
\begin{cases}
X_qX_{q+1}\cdots X_p & (p\geq q)\\
1 & (p=q-1)\\
X^{-1}_{p+1}X^{-1}_{p+2}\cdots X^{-1}_{q-1} & (p<q-1)
\end{cases}
\]
for a sequence $X_1,X_2,\dots$ of (commutative) functions.

Let $x^{[f]}=(x_1,x_2,\dots,x_f)$ and 
\[
H(x^{[f/g]})=H(x^{[f]})-H(x^{[g-1]}).
\]
If $f\geq g$, $H(x^{[f/g]})$ coincides with
$
H(x_{g},x_{g+1},\dots,x_f)
$.
From (\ref{eq:comm1}--\ref{eq:comm3}), we have
\[
\begin{aligned}
e^{H(x^{[f/g]})}\psi(z)e^{-H^\ast(-\beta)}
&=
\left(
\frac{1}{1+\beta z^{-1}}\prod_{j=g}^f\frac{1+\beta x_i}{1-x_iz}
\right)
e^{-H^\ast(-\beta)}\psi(z)e^{H(x^{[f/g]})},
\end{aligned}
\]
where the rational function on the right hand side expands in the ring\footnote{Note that the two rings $\CC((z))[[\beta]]$ and $\CC[[\beta]]((z))$ are different.
In fact, $\CC((z))[[\beta]]$ contains
\[
1+\frac{\beta}{z}+\frac{\beta^2}{z^2}+\cdots,
\]
while $\CC[[\beta]]((z))$ does not.
}
\[
\CC[x_1,x_2,\dots]((z))[[\beta]].
\]

Comparing this equation with \eqref{eq:gen1}, we obtain
\begin{equation}\label{eq:comm_Groth}
e^{H(x^{[f/g]})}
\psi(z)e^{-H^\ast(-\beta)}
=
\left(
\sum_{n\in \ZZ}G_n^{[[f/g]]}(x)z^{n}
\right)
e^{-H^\ast(-\beta)}\psi(z)e^{H(x^{[f/g]})}.
\end{equation}
A similar calculation leads
\begin{equation}\label{eq:comm_dual_Groth}
e^{H^\ast(-\beta)}
\psi^\ast(w)
e^{-H(x^{[f/g]})}
=
\left(
\sum_{n\in \ZZ}G_n^{[[f/g]]}(x)w^{-n}
\right)^{-1}
e^{-H(x^{[f/g]})}
\psi^\ast(w)
e^{H^\ast(-\beta)}.
\end{equation}

\begin{lemma}
$G_n^{[[f/g]]}(x)$ has the fermionic description
\[
G_n^{[[f/g]]}(x)
=
\bra{0}
e^{H(x^{[f/g]})}
\psi_{n-1}
e^{-H^\ast(-\beta)}
\ket{-1}.
\]
\end{lemma}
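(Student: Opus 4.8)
The plan is to prove the identity at the level of generating functions and then extract a single coefficient of $z$. Replacing $\psi_{n-1}$ by the fermion field $\psi(z)=\sum_{m\in\ZZ}\psi_m z^m$, I would study
\[
F(z):=\bra{0}e^{H(x^{[f/g]})}\psi(z)e^{-H^\ast(-\beta)}\ket{-1}
=\sum_{m\in\ZZ}z^m\,\bra{0}e^{H(x^{[f/g]})}\psi_m e^{-H^\ast(-\beta)}\ket{-1},
\]
so that the desired matrix element is precisely the coefficient of $z^{n-1}$ in $F(z)$. The goal is therefore to establish $F(z)=\sum_{n\in\ZZ}G_n^{[[f/g]]}(x)\,z^{n-1}$.

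First I would apply the commutation relation \eqref{eq:comm_Groth} to move the scalar generating function past the two vertex operators:
\[
F(z)=\left(\sum_{n\in\ZZ}G_n^{[[f/g]]}(x)z^n\right)
\bra{0}e^{-H^\ast(-\beta)}\psi(z)e^{H(x^{[f/g]})}\ket{-1}.
\]
Since the prefactor is a scalar series in the completed ring $\CC[x_1,x_2,\dots]((z))[[\beta]]$, the task reduces to evaluating the remaining matrix element $\bra{0}e^{-H^\ast(-\beta)}\psi(z)e^{H(x^{[f/g]})}\ket{-1}$.

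Next I would show that both vertex operators act as the identity on the surviving vacua. Because $H(X)$ is built from the positive current modes $a_n$ $(n>0)$ and $H^\ast(X)$ from the negative modes $a_{-n}$ $(n>0)$, it suffices to verify $a_n\ket{-1}=0$ and $\bra{0}a_{-n}=0$ for $n>0$. Writing $\ket{-1}=\psi^\ast_{-1}\ket{0}$ and using $[a_n,\psi^\ast_{-1}]=-\psi^\ast_{n-1}$ from \eqref{eq:relation_added}, one gets $a_n\ket{-1}=-\psi^\ast_{n-1}\ket{0}=0$ for $n>0$, and the dual statement follows from the involution $\omega$; hence $e^{H(x^{[f/g]})}\ket{-1}=\ket{-1}$ and $\bra{0}e^{-H^\ast(-\beta)}=\bra{0}$. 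The element is then the bare $\bra{0}\psi(z)\ket{-1}$, which I would compute directly: the anti-commutator $[\psi_m,\psi^\ast_{-1}]_+=\delta_{m,-1}$ together with $\bra{0}\psi^\ast_{-1}=0$ gives $\bra{0}\psi_m\psi^\ast_{-1}\ket{0}=\delta_{m,-1}$, so $\bra{0}\psi(z)\ket{-1}=z^{-1}$. Substituting back yields $F(z)=z^{-1}\sum_{n}G_n^{[[f/g]]}(x)z^n$, and comparing coefficients of $z^{n-1}$ gives the claim.

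The computation is essentially routine once the framework is in place; the only point demanding genuine care is the vacuum-annihilation step, where one must keep straight exactly which current modes kill $\ket{-1}$ versus $\bra{0}$, and confirm that the formal manipulations are legitimate in the completed coefficient ring rather than a naive power-series ring.
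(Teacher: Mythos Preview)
Your proposal is correct and follows essentially the same approach as the paper's own proof: pass to the generating function, apply \eqref{eq:comm_Groth}, use the vacuum annihilation properties $\bra{0}e^{-H^\ast(-\beta)}=\bra{0}$ and $e^{H(x^{[f/g]})}\ket{-1}=\ket{-1}$, and evaluate $\bra{0}\psi(z)\ket{-1}=z^{-1}$. You supply slightly more justification for the vacuum-annihilation step than the paper does, but otherwise the arguments are identical.
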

\begin{proof}
Let $F_n=\bra{0}
e^{H(x^{[f/g]})}
\psi_{n-1}
e^{-H^\ast(-\beta)}
\ket{-1}$.
Since $\bra{0}e^{H^\ast(-\beta)}=\bra{0}$ and $e^{H(\bmm{x}^{[f/g]})}\ket{-1}=\ket{-1}$, we have
\[
\begin{aligned}
\sum_{n\in \ZZ}
F_n
z^{n-1}
&=
\bra{0}
e^{H(\bmm{x}^{[f/g]})}
\psi(z)
e^{-H^\ast(-\beta)}
\ket{-1}\\
&=
\left(
\sum_{m\in \ZZ}G_m^{[[f/g]]}(x)z^m
\right)
\bra{0}
e^{-H^\ast(-\beta)}
\psi(z)
e^{H(\bmm{x}^{[[f/g]]})}
\ket{-1}\qquad
\mbox{(Eq.~\eqref{eq:comm_Groth})}
\\
&
=
\left(
\sum_{m\in \ZZ}G_m^{[[f/g]]}(x)z^m
\right)
\bra{0}
\psi(z)
\ket{-1}.
\end{aligned}
\]
Since $\bra{0}\psi(z)\ket{-1}=\bra{0}\psi(z)\psi^\ast_{-1}\ket{0}=z^{-1}$, we conclude $F_n=G_n^{[[f/g]]}(x)$.
\end{proof}

\subsection{Fermionic description}

We introduce a fermionic description of skew Flagged Grothendieck polynomials in this section.
For a sequence of noncommutative elements $P_1,P_2,\dots$, denote
\[
\prod_{i:1\to N}P_i:=P_1P_2\cdots P_N,\qquad
\prod_{i:N\to 1}P_i:=P_N\cdots P_2P_1.
\]
For any $X,Y$, we use the notation $\mathrm{Ad}_{e^X}(Y)=e^XYe^{-X}$.

Let $f=(f_1, f_2, \dots ,f_r)$ and $g=(g_1, g_2, \dots , g_r)$ be sequences of positive integers.
Let $G_{\lambda/\mu,[[f/g]]}(x)$ be the polynomial defined by the determinantal formula \eqref{eq:def_det}.
The following is the main theorem of the paper:
\begin{thm}\label{thm:main}
Let $\lambda/\mu$ be a skew partition. 
Then, the flagged skew Grothendieck polynomial $G_{\lambda/\mu,[[f/g]]}(x)$ is expressed as
\begin{equation}\label{eq:main_expression}
\bra{-r}
\left(
\prod_{j:r\to 1}
\psi^\ast_{\mu_j-j}
e^{H^\ast(-\beta)}
e^{-H(\bmm{x}^{[g_j-1/g_{j-1}]})}
\right)
\left(
\prod_{i:1\to r}
e^{H(\bmm{x}^{[f_i/f_{i-1}+1]})}\psi_{\lambda_i-i}e^{-H^\ast(-\beta)}
\right)
\ket{-r},
\end{equation}
where $x^{[f_1/f_0+1]}=x^{[f_1]}$ and $x^{[g_1-1/g_0]}=x^{[g_1-1]}$.
\end{thm}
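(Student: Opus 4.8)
The plan is to reduce the vacuum expectation value \eqref{eq:main_expression} to a determinant by Wick's theorem (Theorem \ref{thm:Wick}) and then match that determinant, entry by entry, with the Jacobi--Trudi determinant \eqref{eq:def_det}. First I would replace each bare fermion by a generating field, writing $\psi_{\lambda_i-i}$ as the coefficient of $z_i^{\lambda_i-i}$ in $\psi(z_i)$ and $\psi^\ast_{\mu_j-j}$ as the coefficient of $w_j^{-(\mu_j-j)}$ in $\psi^\ast(w_j)=\sum_n\psi^\ast_nw_j^n$. After this substitution \eqref{eq:main_expression} becomes the coefficient, in the monomial $\prod_iz_i^{\lambda_i-i}\prod_jw_j^{-(\mu_j-j)}$, of a vacuum expectation value of a product of \emph{dressed} fermion fields. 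The core step is to move all vertex operators out of the correlator: push the operators $e^{H(\cdot)}$ to the right, where $e^{H(X)}\ket{-r}=\ket{-r}$, and the operators $e^{H^\ast(-\beta)}$ to the left, where $\bra{-r}e^{H^\ast(X)}=\bra{-r}$. Here I would use \eqref{eq:comm1} and \eqref{eq:comm2} to move a vertex operator past a fermion field, \eqref{eq:comm3} to move $e^{H(\cdot)}$ past $e^{-H^\ast(\cdot)}$, and---most efficiently---the packaged relations \eqref{eq:comm_Groth} and \eqref{eq:comm_dual_Groth}, which collect exactly the series $\sum_nG_n^{[[f/g]]}(x)z^n$. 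The decisive feature is the telescoping of the difference flags: since $H(x^{[f_i/f_{i-1}+1]})=H(x^{[f_i]})-H(x^{[f_{i-1}]})$ and $H(x^{[g_j-1/g_{j-1}]})=H(x^{[g_j-1]})-H(x^{[g_{j-1}-1]})$, the partial flags assembled as the $e^{H(\cdot)}$ sweep through the string reconstitute the full flags $f_i$ on $\psi(z_i)$ and $g_j-1$ on $\psi^\ast(w_j)$, so that each eventual contraction sees the series $\sum_nG_n^{[[f_i/g_j]]}(x)z_i^n$ rather than a collapsed flag.

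Once the vertex operators are removed, the remaining correlator is the bare expectation $\bra{-r}\psi^\ast(w_r)\cdots\psi^\ast(w_1)\psi(z_1)\cdots\psi(z_r)\ket{-r}$ multiplied by a scalar prefactor $\Phi(z,w)$ accumulated from the commutations above. I would reorder the fields into the standard order required by Theorem \ref{thm:Wick}, tracking the sign produced by the anti-commutation relations, and then apply Wick's theorem, so that the bare correlator becomes $\det\bigl(\bra{-r}\psi^\ast(w_j)\psi(z_i)\ket{-r}\bigr)_{1\le i,j\le r}$; by \eqref{eq:comm4} each two-point function equals $\frac{z_i^{-r}w_j^{-r}}{1-z_iw_j}$.

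It remains to combine $\Phi(z,w)$ with this determinant and extract the relevant coefficient. The $x$-dependent part of $\Phi$ distributes, by multilinearity of the determinant in its rows and columns, into row $i$ and column $j$, turning the $(i,j)$ entry into the coefficient extraction of $\mathcal{G}^{[[f_i/g_j]]}(z_i)\cdot\frac{z_i^{-r}w_j^{-r}}{1-z_iw_j}$, where $\mathcal{G}^{[[f_i/g_j]]}(z_i)=\sum_nG_n^{[[f_i/g_j]]}(x)z_i^n$. The binomial weight is produced by the $\beta$-cross factors: commuting the $e^{H^\ast(-\beta)}$ attached to the $\psi^\ast$-blocks past the $e^{-H^\ast(-\beta)}$ and the fields lying between positions $j$ and $i$ yields, on the contraction $\psi(z_i)\leftrightarrow\psi^\ast(w_j)$, the extra factor $(1+\beta z_i^{-1})^{i-j}=\sum_s\binom{i-j}{s}\beta^sz_i^{-s}$. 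Using $[z^N]\bigl((1+\beta z^{-1})^{i-j}\mathcal{G}^{[[f_i/g_j]]}(z)\bigr)=\sum_s\binom{i-j}{s}\beta^sG^{[[f_i/g_j]]}_{N+s}(x)$ with $N=\lambda_i-\mu_j-i+j$, the $(i,j)$ entry becomes exactly $\sum_s\binom{i-j}{s}\beta^sG^{[[f_i/g_j]]}_{\lambda_i-\mu_j-i+j+s}(x)$, which is the entry of \eqref{eq:def_det}. This identifies the determinant with $G_{\lambda/\mu,[[f/g]]}(x)$ and proves the theorem.

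The hardest part is exactly this last bookkeeping of the $\beta$-cross factors. One must show that the net power of $(1+\beta z_i^{-1})$ attached to the $(i,j)$ contraction is precisely $i-j$---that the contributions of the various $e^{\pm H^\ast(-\beta)}$ blocks telescope to a single binomial exponent rather than a collapsed or over-counted power---while simultaneously verifying that the $x$-flags localize to $f_i$ and $g_j-1$ and keeping track of the signs introduced when reordering the fields for Wick's theorem. The nested definition \eqref{eq:gen1} of $G_n^{[[f/g]]}(x)$ (in particular the uniform prefactor $\tfrac{1}{1+\beta u^{-1}}$) and the specific order of the blocks in \eqref{eq:main_expression} are what make these two localizations compatible, and confirming this compatibility is the crux of the argument.
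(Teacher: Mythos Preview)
Your overall strategy coincides with the paper's: pass to generating fields, telescope the difference flags, strip the vertex operators, apply Wick's theorem against the kernel \eqref{eq:comm4}, and match entries with \eqref{eq:def_det}. The gap is in your claim that multilinearity alone turns the $(i,j)$ entry into $\mathcal{G}^{[[f_i/g_j]]}(z_i)(1+\beta z_i^{-1})^{i-j}\cdot\frac{z_i^{-r}w_j^{-r}}{1-z_iw_j}$. After the commutations the scalar prefactor factors as a \emph{row factor in $z_i$} (carrying $f_i$, $g_r$, and the power $(1+\beta z_i^{-1})^{-(r-i+1)}$) times a \emph{column factor in $w_j$} (carrying $g_j$, $g_r$, and the power $(1+\beta w_j)^{r-j}$); the product of these is not $\mathcal{G}^{[[f_i/g_j]]}(z_i)$, and neither the auxiliary dependence on $g_r$ nor the variable $w_j$ disappears by row/column multilinearity.

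The paper closes this gap by a contour integral in $t=w_j^{-1}$: on the annulus $|z_i|<|t|<|x_k^{-1}|$ the only pole of the integrand is the simple pole $t=z_i$ contributed by the two--point kernel $(1-z_iw_j)^{-1}$, so extracting the coefficient of $w_j^{\mu_j-j}$ amounts to the residue there, i.e.\ to the substitution $w_j^{-1}\to z_i$. That substitution converts $(1+\beta w_j)^{r-j}$ into $(1+\beta z_i^{-1})^{r-j}$ and $\prod_{k=g_j}^{g_r-1}(1-x_kw_j^{-1})^{-1}$ into $\prod_{k=g_j}^{g_r-1}(1-x_kz_i)^{-1}$; only after this do the $g_r$-dependent pieces cancel against their counterparts in the row factor, and the survivor collapses to $\mathcal{G}^{[[f_i/g_j]]}(z_i)(1+\beta z_i^{-1})^{i-j}z_i^{\mu_j-j}$, giving the desired entry of \eqref{eq:def_det}. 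Your instinct that this is ``the hardest part'' is correct, but the mechanism you describe---a direct telescoping of the $e^{\pm H^\ast(-\beta)}$ commutations to the exponent $i-j$---is not what happens; the exponent $i-j$ and the flag $[[f_i/g_j]]$ emerge only after the residue step merges the $w_j$-column factor into the $z_i$-row factor.
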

\begin{proof}
Since 
\[
H(x^{[f_i/f_{i-1}+1]})=H(x^{[f_i]})-H(x^{[f_{i-1}]}),\quad   H(\bmm{x}^{[g_j-1/g_{j-1}]})=H(\bmm{x}^{[g_j-1]})-H(\bmm{x}^{[g_{j-1}-1]}),
\]
and $e^{H(x)}\ket{-r}=\ket{-r}$,
the expectation value \eqref{eq:main_expression} equals to
\begin{equation}\label{eq:main_expression2}
\begin{split}
\bra{-r}
e^{-H(x^{[g_r-1]})}
\left(
\prod_{j:r\to 1}
e^{H(\bmm{x}^{[g_j-1]})}
\psi^\ast_{\mu_j-j}
e^{H^\ast(-\beta)}
e^{-H(\bmm{x}^{[g_j-1]})}
\right)\\
\cdot
\left(
\prod_{i:1\to r}
e^{H(\bmm{x}^{[f_i]})}\psi_{\lambda_i-i}e^{-H^\ast(-\beta)}
e^{-H(\bmm{x}^{[f_i]})}
\right)
\ket{-r}.
\end{split}
\end{equation}
By using the equations
\[
\begin{gathered}
\left(\prod_{j:r\to 1}P_j\right)e^{-rX}
=
\prod_{j:r\to 1}(e^{(r-j)X}P_je^{-(r-j+1)X})
=
\prod_{j:r\to 1}\mathrm{Ad}_{e^{(r-j)X}}(P_je^{-X}),\\
e^{rX}
\prod_{i:1\to r}P_i=
\prod_{i:1\to r}
(e^{(r-i+1)X}P_ie^{-(r-i)X})
=
\prod_{i:1\to r}\mathrm{Ad}_{e^{(r-i)X}}(e^XP_i),
\end{gathered}
\]
\eqref{eq:main_expression2} is rewritten as
\begin{equation}\label{eq:tocyuu}
\begin{split}
\bra{-r}
e^{-H(\bmm{x}^{[g_r-1]})}
\left(
\prod_{j:r\to 1}
\mathrm{Ad}_{e^{(r-j)H^\ast(-\beta)}}
\left(
e^{H(\bmm{x}^{[g_j-1]})}
\psi^\ast_{\mu_j-j}e^{H^\ast(-\beta)}
e^{-H(\bmm{x}^{[g_j-1]})}
e^{-H^\ast(-\beta)}
\right)
\right)\\
\cdot
\left(
\prod_{i:1\to r}
\mathrm{Ad}_{e^{(r-i)H^\ast(-\beta)}}
\left(
e^{H^\ast(-\beta)}
e^{H(\bmm{x}^{[f_i]})}\psi_{\lambda_i-i}e^{-H^\ast(-\beta)}
e^{-H(\bmm{x}^{[f_i]})}
\right)
\right)
\ket{-r}.
\end{split}
\end{equation}
Let
\[
\begin{gathered}
A_i(z_i):=
\mathrm{Ad}_{e^{(r-i)H^\ast(-\beta)}}
\left(
e^{H^\ast(-\beta)}
e^{H(\bmm{x}^{[f_i]})}\psi(z_i)e^{-H^\ast(-\beta)}
e^{-H(\bmm{x}^{[f_i]})}
\right),\\
B_j(w_j):=
\mathrm{Ad}_{e^{(r-j)H^\ast(-\beta)}}
\left(
e^{H(\bmm{x}^{[g_j-1]})}
\psi^\ast(w_j)e^{H^\ast(-\beta)}
e^{-H(\bmm{x}^{[g_j-1]})}
e^{-H^\ast(-\beta)}
\right)
\end{gathered}
\]
Then, by Wick's theorem (Theorem \ref{thm:Wick}), the expectation value \eqref{eq:tocyuu} equals to the coefficient of 
$z_1^{\lambda_1-1}\cdots z_r^{\lambda_r-r}\cdot 
w_1^{\mu_1-1}\cdots w_r^{\mu_r-r}
$ in the determinant
\begin{equation}\label{eq:determinant}
\det\left(
\bra{-r}
e^{-H(\bmm{x}^{[g_r-1]})}
B_j(w_j)A_i(z_i)
\ket{-r}
\right)_{i,j}.
\end{equation}
From \eqref{eq:comm_Groth}, $A_i(z_i)$ satisfies
\[
\begin{aligned}
A_i(z_i)
&=
\left(
\sum_{n\in \ZZ}G_n^{[f_i]}(x)z_i^n
\right)
\cdot 
\mathrm{Ad}_{e^{(r-i)H^\ast(-\beta)}}(\psi(z_i))\\
&=
\left(
\sum_{n\in \ZZ}G_n^{[f_i]}(x)z_i^n
\right)
\cdot (1+\beta z_i^{-1})^{-(r-i)}\cdot \psi(z_i).
\end{aligned}
\]
On the other hand, since
\[
\begin{aligned}
&
e^{H(\bmm{x}^{[g_j-1]})}
\psi^\ast(w_j)e^{H^\ast(-\beta)}
e^{-H(\bmm{x}^{[g_j-1]})}
e^{-H^\ast(-\beta)}\\
&=
(1+\beta w_j)^{-1}
\cdot 
e^{H(\bmm{x}^{[g_j-1]})}
e^{H^\ast(-\beta)}\psi^\ast(w_j)
e^{-H(\bmm{x}^{[g_j-1]})}
e^{-H^\ast(-\beta)}\\
&=
\left(
\sum_{n\in \ZZ}G_n^{[g_j-1]}(x)w_j^{-n}
\right)^{-1}
(1+\beta w_j)^{-1}
\cdot
\psi^\ast(w_j)\qquad
\mbox{(Eq.~\eqref{eq:comm_dual_Groth})}
,
\end{aligned}
\]
we obtain
\[
\begin{aligned}
B_j(w_j)
&=
\left(
\sum_{n\in \ZZ}G_n^{[g_j-1]}(x)w_j^{-n}
\right)^{-1}
\cdot 
(1+\beta w_j)^{-1}
\mathrm{Ad}_{e^{(r-j)H^\ast(-\beta)}}(\psi^\ast(w_j))\\
&=
\left(
\sum_{n\in \ZZ}G_n^{[g_j-1]}(x)w_j^{-n}
\right)^{-1}
\cdot 
(1+\beta w_j)^{r-j-1}
\cdot 
\psi^\ast(w_j).
\end{aligned}
\]
Therefore, we have
\[
\begin{aligned}
&
\bra{-r}
e^{-H(\bmm{x}^{[g_r-1]})}
B_j(w_j)A_i(z_i)
\ket{-r}
\\
&
=
\frac{\sum_n G_{n}^{[f_i]}(x)z_i^n}{\sum_n G_n^{[g_j-1]}(x)w_j^{-n}}
\frac{(1+\beta w_j)^{r-j-1}}{(1+\beta z_i^{-1})^{r-i}}
\bra{-r}
e^{-H(\bmm{x}^{[g_r-1]})}
\psi^\ast(w_j)
\psi(z_i)
\ket{-r}
\\
&=
\frac{\sum_n G_{n}^{[f_i]}(x)z_i^n}{\sum_n G_n^{[g_j-1]}(x)w_j^{-n}}
\frac{(1+\beta w_j)^{r-j-1}}{(1+\beta z_i^{-1})^{r-i}}
\frac{\prod_{k=1}^{g_r-1} (1-x_k z_i)}{\prod_{k=1}^{g_r-1}(1-x_kw_j^{-1})}
\bra{-r}
\psi^\ast(w_j)
\psi(z_i)
\ket{-r}
\\
&=
\frac{\sum_n G_{n}^{[f_i]}(x)z_i^n}{\sum_n G_n^{[g_j-1]}(x)w_j^{-n}}
\frac{(1+\beta w_j)^{r-j-1}}{(1+\beta z_i^{-1})^{r-i}}
\frac{\prod_{k=1}^{g_r-1} (1-x_k z_i)}{\prod_{k=1}^{g_r-1}(1-x_kw_j^{-1})}
\frac{z_i^{-r}w_j^{-r}}{1-z_iw_j}
\qquad 
\mbox{(Eq.~\eqref{eq:comm4})}
\\
&=
\frac{\prod_{k=1}^{f_i} (1+\beta x_k)}{\prod_{k=1}^{g_j-1} (1+\beta x_k)}
\frac{(1+\beta w_j)^{r-j}}{(1+\beta z_i^{-1})^{r-i+1}}
\frac{\prod_{k=f_i+1}^{g_r-1} (1-x_k z_i)}{\prod_{k=g_j}^{g_r-1}(1-x_kw_j^{-1})}
\frac{z_i^{-r}w_j^{-r}}{1-z_iw_j}\\
&=:F(z_i,w_j).
\end{aligned}
\]
To take the coefficient of $z^{\lambda_i-i}w^{\mu_j-j}$ in $F(z,w)$, we use the complex line integral. 
Note that the expansion of the rational function $F(z,w)$ in the field \[\CC[x_1,x_2,\dots]((w^{-1}))((z))[[\beta]]\] coincides with the Laurent expansion on the domain $\{|\beta|<|z|<|w^{-1}|<|x_k^{-1}|\,;\,\forall k\}$. 
Then, we have
\begin{align}
&
[w^{\mu_j-j}]
\bra{-r}
e^{-H(\bmm{x}^{[g_r-1]})}
B_j(w)A_i(z)
\ket{-r}
\nonumber\\
&=\frac{1}{2\pi \sqrt{-1}}
\oint_{|\beta|<|z|<|w^{-1}|<|x_k^{-1}|}
F(z,w)
\cdot (w^{-1})^{\mu_j-j}
\frac{d(w^{-1})}{(w^{-1})}\nonumber\\
&=\frac{1}{2\pi \sqrt{-1}}
\oint_{|\beta|<|z|<|t|<|x_k^{-1}|}
F(z,t^{-1})\cdot t^{\mu_j-j-1}dt.\label{eq:contour}
\end{align}
Since $F(z,t^{-1})\cdot t^{\mu_j-j-1}dt$ is expressed as
\[
\frac{\prod_{k=1}^{f_i} (1+\beta x_k)}{\prod_{k=1}^{g_j-1} (1+\beta x_k)}
\frac{(t+\beta)^{r-j}}{(1+\beta z^{-1})^{r-i+1}}
\frac{\prod_{k=f_i+1}^{g_r-1} (1-x_k z)}{\prod_{k=g_j}^{g_r-1}(1-x_kt)}
\frac{z_i^{-r}}{t-z}\cdot t^{\mu_j}dt,
\]
the contour integral \eqref{eq:contour} coincides with the residue of the differential form at $t=z$.
Finally, it equals to
\begin{equation}\label{eq:entry}
\begin{aligned}
\frac{1}{1+\beta z^{-1}}\prod_{k=g_j}^{f_i}\frac{1+\beta x_k}{1-x_kz}\cdot (z+\beta)^{i-j}z^{\mu_j-i}
=
\left(\sum_{n\in \ZZ}G_n^{[[f_i/g_j]]}(x)z^n\right)
(1+\beta z^{-1})^{i-j}z^{\mu_j-j}.
\end{aligned}
\end{equation}
Since the coefficient of $z^{\lambda_i-i}$ in \eqref{eq:entry} is 
$
\sum_{s=0}^\infty \binom{i-j}{s}\beta^s G^{[[f_i/g_j]]}_{\lambda_i-\mu_j-i+j+s}(x)
$,
the coefficient of  $z_1^{\lambda_1-1}\cdots z_r^{\lambda_r-r}\cdot 
w_1^{\mu_1-1}\cdots w_r^{\mu_r-r}
$ in the determinant \eqref{eq:determinant} is 
\[
\det\left(
\sum_{s=0}^\infty \binom{i-j}{s}\beta^s G^{[[f_i/g_j]]}_{\lambda_i-\mu_j-i+j+s}(x)
\right)_{i,j}.
\]
Comparing it with \eqref{eq:def_det}, we conclude the theorem.
\end{proof}

\subsection{Remarks}

If $g_1=g_2=\dots=g_r=1$, $G_{\lambda/\mu,[[f/g]]}(x)$ reduces to the (usual) flagged Grothendieck polynomial $G_{\lambda/\mu,f}(x)$.
In this case, our main Theorem \ref{thm:main} reduces to 
\begin{equation}\label{eq:des_Glm}
\begin{split}
G_{\lambda/\mu,f}(x)
=&
\bra{-r}
\psi^\ast_{\mu_r-r}e^{H^\ast(-\beta)}
\dots
\psi^\ast_{\mu_2-2}e^{H^\ast(-\beta)}
\psi^\ast_{\mu_1-1}e^{H^\ast(-\beta)}\\
&\cdot 
(e^{H(x^{[f_1]})}\psi_{\lambda_1-1}e^{-H^\ast(-\beta)})
\cdots
(e^{H(x^{[f_r/f_{r-1}]})}\psi_{\lambda_r-r}e^{-H^\ast(-\beta)})
\ket{-r}.
\end{split}
\end{equation}
By taking $f_1=f_2=\dots=f_r=n$, we recover the fermionic presentation of the skew Grothendieck polynomial given in \cite[\S 4.2]{iwao2022free}.
This expression is not found in the previous work \cite{iwao2023free} of fermionic presentations for multi-Schur functions.

\providecommand{\bysame}{\leavevmode\hbox to3em{\hrulefill}\thinspace}
\providecommand{\MR}{\relax\ifhmode\unskip\space\fi MR }
\providecommand{\MRhref}[2]{%
  \href{http://www.ams.org/mathscinet-getitem?mr=#1}{#2}
}
\providecommand{\href}[2]{#2}


\end{document}